\newcommand{\N}{\mathbb N}
\newcommand{\R}{\mathbb R}
\providecommand{\abs}[1]{\left\lvert#1\right\rvert}
\providecommand{\norm}[1]{\left\lVert#1\right\rVert}
\providecommand{\bignorm}[1]{\bigl\lVert#1\bigr\rVert}
\providecommand{\ip}[2]{\left\langle #1, #2 \right\rangle}
\newcommand{\A}{\operatorname{A}}
\newcommand{\F}{\operatorname{F}}
\newcommand{\Fhat}{\hat{\mathrm F}}
\numberwithin{equation}{section}
\theoremstyle{plain}
\newtheorem{theorem}{Theorem}[section]
\newtheorem{lemma}[theorem]{Lemma}
\newtheorem{example}[theorem]{Example}
\newtheorem{corollary}[theorem]{Corollary}
\theoremstyle{remark}
\newtheorem{remark}[theorem]{Remark}
\title[quasinonexpansive extension]{%
A quasinonexpansive extension of a mapping with an attractive point in a
Hilbert space}
\author{Koji~Aoyama}
\address[K.~Aoyama]
{Aoyama Mathematical Laboratory,
Konakadai, Inage-ku, Chiba, Chiba 263-0043, Japan}
\email{aoyama@bm.skr.jp}
\keywords{Attractive point, 
quasinonexpansive extension, 
generalized hybrid mapping, approximation algorithm}
\subjclass[2010]{47J25, 47J20, 47H09}
\begin{document}

\begin{abstract}
 In this paper, we show that, under appropriate conditions, there
 exists a quasinonexpansive extension of a mapping with an attractive
 point in the sense of Takahashi and Takeuchi~\cite{MR2858695} such
 that the fixed point set of the extension equals the attractive point
 set of the given mapping. 
 Then using the quasinonexpansive extension, 
 we establish some convergence theorems for approximating attractive
 points of a generalized hybrid mapping in the sense of Kocourek,
 Takahashi, and Yao~\cite{MR2761610}.
\end{abstract}

\maketitle

\section{Introduction}

Let $H$ be a Hilbert space, $C$ a subset of $H$, and $T\colon C\to H$ a
mapping.
Takahashi and Takeuchi~\cite{MR2858695} introduced the notion of an
attractive point of $T$; see \S2 for the definition of an attractive
point. 
It is easy to verify that if $T$ is quasinonexpansive, then 
every fixed point of $T$ is an attractive point of $T$. 
Thus an attractive point is regarded as a generalization of a fixed
point for a quasinonexpansive mapping. 

Takahashi and Takeuchi~\cite{MR2858695} also established a mean
convergence theorem for an attractive point of a generalized hybrid
mapping in the sense of Kocourek et al.~\cite{MR2761610}; 
see \S2 for the definition of a generalized hybrid mapping. 
Such a mapping originates from a $\lambda$-hybrid mapping
introduced in Aoyama et al.~\cite{MR2682871}; 
see also \cites{MR2981792,MR3017202}. 
We know some existence and convergence results for attractive points 
of a generalized hybrid mapping and its variants; see, for example, 
\cites{MR2983907, MR3015118, MR3073499, MR3397117}. 

In this paper, 
we prove that, under appropriate conditions, 
if a mapping $T\colon C \to H$ has an attractive point, then there
exists a quasinonexpansive extension $\tilde{T}\colon H\to H$ of $T$
such that the set of fixed points (or asymptotic fixed points) of 
$\tilde{T}$ equals that of attractive points of $T$. 
Then using the quasinonexpansive extension, 
we derive convergence theorems for attractive points from those for 
fixed points of quasinonexpansive mappings. 
Moreover, we also obtain convergence results for attractive points of a
generalized hybrid mapping.

\section{Preliminaries}

Throughout the present paper, 
$H$ denotes a real Hilbert space, 
$\ip{\,\cdot\,}{\,\cdot\,}$ the inner product of $H$, 
$\norm{\,\cdot\,}$ the norm of $H$, 
$C$ a nonempty subset of $H$, 
$I$ the identity mapping on $H$, 
and $\N$ the set of positive integers. 
Strong convergence of a sequence $\{x_n\}$ in $H$ to $z\in H$ is denoted
by $x_n \to z$ and weak convergence by $x_n \rightharpoonup z$. 

Let $T\colon C\to H$ be a mapping. 
Then the set of fixed points of $T$ is denoted by $\F(T)$, that is, 
$\F(T) = \{z \in C\colon Tz=z\}$. 
A point $z \in H$ is said to be an \emph{asymptotic fixed point} of
$T$~\cite{MR1386686} if there exists a sequence $\{x_n\}$ in $C$ such
that $x_n - T x_n\to 0$ and $x_n \rightharpoonup z$. 
The set of asymptotic fixed points of $T$ is denoted by $\Fhat(T)$. 
It is clear that $\F(T) \subset \Fhat(T)$. 
A point $z \in H$ is said to be an \emph{attractive point} of
$T$~\cite{MR2858695} if $\norm{Tx - z} \leq \norm{x-z}$ for all $x \in
C$. 
The set of attractive points of $T$ is denoted by $\A(T)$, that is, 
\[
\A(T) = \bigcap_{x\in C} \{ z \in H\colon \norm{Tx - z} \leq
\norm{x-z}\}. 
\]
It is clear that $C \cap \A(T) \subset \F(T)$, and that 
$\A(T)$ is closed and convex. 

Let $T\colon C\to H$ be a mapping and $F$ a nonempty subset of $H$. 
Then $T$ is said to be \emph{quasinonexpansive with respect to
$F$}~\cite{sqnIII} if $\norm{Tx - z} \leq \norm{x-z}$ for all $x\in C$
and $z\in F$; 
$T$ is said to be \emph{quasinonexpansive} if $\F(T) \ne \emptyset$
and $\norm{Tx - z} \leq \norm{x-z}$ for all $x\in C$ and $z\in \F(T)$; 
$T$ is said to be \emph{nonexpansive} if 
$\norm{Tx - Ty} \leq \norm{x-y}$ for all $x,y\in C$; 
$T$ is said to be \emph{generalized hybrid}~\cite{MR2761610} 
if there exist $\alpha,\beta \in \R$ such that 
\[
 \alpha \norm{Tx-Ty}^2 + (1-\alpha)\norm{x-Ty}^2 \leq 
 \beta \norm{Tx-y}^2 + (1-\beta) \norm{x-y}^2
\]
for all $x,y \in C$; 
$T$ is said to be \emph{demiclosed at $0$} if 
$Tz = 0$ whenever $\{ x_n \}$ is a sequence in $C$ such that 
$x_n\rightharpoonup z$ and $T x_n \to 0$; see, for example, 
\cite{MR1074005}. 
It is clear that 
\begin{itemize}
 \item if $\A(T) \ne \emptyset$, 
       then $T$ is quasinonexpansive with respect to~$\A(T)$;
 \item if $T$ is a generalized hybrid mapping, then 
       $\F(T) \subset \A(T)$;
 \item $I-T$ is demiclosed at $0$ if and only if 
       $\Fhat(T) = \F(T)$. 
\end{itemize}
Moreover, under the assumption that $C$ is closed and convex, 
we know the following:
\begin{itemize}
 \item If $T$ is quasinonexpansive, then $\F(T)$
       is closed and convex; see~\cite{MR0298499}*{Theorem~1};
 \item if $T$ is nonexpansive, then $I-T$ is
       demiclosed at $0$; see~\cite{MR1074005}. 
\end{itemize}

A generalized hybrid mapping has the following property: 

\begin{lemma}[\cite{MR3397117}*{Lemma 3.1}] \label{l:twy15:demiclosed}
 Let $H$ be a Hilbert space, $C$ a nonempty subset of $H$,
 $T\colon C \to H$ a generalized hybrid mapping, 
 and $\{x_n\}$ a sequence in $C$ such that 
 $x_n - Tx_n \to 0$ and $x_n \rightharpoonup z$. 
 Then $z \in \A(T)$, that is, $\Fhat(T) \subset \A(T)$. 
\end{lemma}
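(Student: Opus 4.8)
The plan is to fix an arbitrary $x \in C$ and show $\norm{Tx - z} \leq \norm{x-z}$; since $x$ is arbitrary, this yields $z \in \A(T)$. The starting point is the generalized hybrid inequality applied with the first variable equal to $x_n$ and the second variable equal to the \emph{fixed} point $x$:
\[
\alpha \norm{Tx_n - Tx}^2 + (1-\alpha)\norm{x_n - Tx}^2 \leq \beta \norm{Tx_n - x}^2 + (1-\beta)\norm{x_n - x}^2.
\]
This particular choice of substitution is the crux of the argument: because $Tx$ and $x$ occupy the second slots, the coefficients $\alpha$ and $\beta$ will ultimately cancel.

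First I would record two facts: a weakly convergent sequence is bounded, so $\{x_n\}$ is bounded; and by hypothesis $x_n - Tx_n \to 0$ strongly. Writing $Tx_n = x_n - (x_n - Tx_n)$ and expanding $\norm{Tx_n - Tx}^2$ and $\norm{Tx_n - x}^2$, each cross term pairs the bounded sequence against the null sequence $x_n - Tx_n$ and hence tends to $0$, as does $\norm{x_n - Tx_n}^2$. Thus $\norm{Tx_n - Tx}^2 = \norm{x_n - Tx}^2 + o(1)$ and $\norm{Tx_n - x}^2 = \norm{x_n - x}^2 + o(1)$. Substituting these into the inequality, the $\alpha$- and $\beta$-dependent terms collapse (each side becomes a single squared norm plus a fixed multiple of $o(1)$), leaving the clean estimate $\norm{x_n - Tx}^2 \leq \norm{x_n - x}^2 + o(1)$.

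The last step is to pass to the limit. Expanding both squared norms, the troublesome term $\norm{x_n}^2$ — which need not converge — appears on both sides and cancels, giving
\[
\norm{x_n - Tx}^2 - \norm{x_n - x}^2 = 2\ip{x_n}{x - Tx} + \norm{Tx}^2 - \norm{x}^2.
\]
Only a term linear in $x_n$ survives, so the weak convergence $x_n \rightharpoonup z$ lets me pass to the limit and conclude $2\ip{z}{x - Tx} + \norm{Tx}^2 - \norm{x}^2 \leq 0$. Finally I would observe that this quantity is exactly $\norm{Tx - z}^2 - \norm{x - z}^2$, whence $\norm{Tx - z} \leq \norm{x - z}$, as required.

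I expect the main obstacle to be not any single computation but the initial choice of substitution: a naive substitution (for instance, placing $x_n$ in the second slot) leaves a factor $\alpha - \beta$ multiplying the relevant difference, whose sign is uncontrolled and which blocks the conclusion. The remaining care is bookkeeping — verifying that the $o(1)$ terms genuinely vanish using boundedness of $\{x_n\}$, and exploiting the cancellation of $\norm{x_n}^2$ so that only the weakly continuous linear term remains in the limit.
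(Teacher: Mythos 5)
Your proof is correct: fixing $x \in C$, substituting $x_n$ into the first slot and $x$ into the second slot of the generalized hybrid inequality, reducing via $Tx_n = x_n + o(1)$ and boundedness of $\{x_n\}$, and then cancelling $\norm{x_n}^2$ so that only the weakly continuous linear term survives is exactly the standard argument. Note that the paper itself gives no proof of this lemma—it is quoted with a citation to \cite{MR3397117}*{Lemma 3.1}—and your argument is essentially the one found in that source, including the correct observation that the opposite substitution would leave an uncontrolled factor of $\alpha-\beta$.
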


Let $D$ be a nonempty closed convex subset of $H$. 
It is known that, for each $x \in H$, 
there exists a unique point $x_0 \in D$ such that 
\[
 \norm{x - x_0} = \min\{\norm{x-y}: y\in D\}. 
\]
Such a point $x_0$ is denoted by $P_D (x)$ and $P_D$ is called the
\emph{metric projection} of $H$ onto $D$.
It is known that the metric projection is nonexpansive; 
see~\cite{MR2548424} for more details. 

The following theorem is a direct consequence
of~\cite{MR2960628}*{Theorem 5.5}; see also~\cite{MR2773244}*{Theorem
3.4}. 

\begin{theorem}\label{t:AKT2012JNAO}
 Let $H$ be a Hilbert space, 
 $T \colon H \to H$ a quasinonexpansive mapping, 
 $\{\alpha_n \}$ a sequence in $(0,1]$, 
 $\{\beta_n \}$ a sequence in $[0,1]$, and 
 $\{x_n\}$ a sequence defined by $u, x_1 \in H$ and 
 \[
 x_{n+1} = \alpha_n u + (1 - \alpha_n) 
  [\beta_n x_n + (1-\beta_n)T x_n]
 \]
 for $n\in \N$. 
 Suppose that $\Fhat(T) = \F(T)$, 
 $\alpha_n \to 0$, $\sum_n \alpha_n = \infty$, and 
 $\liminf_{n} \beta_n (1-\beta_n)> 0$. 
 Then $\{ x_n \}$ converges strongly to $P_{\F(T)}(u)$. 
\end{theorem}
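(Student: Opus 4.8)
Write $p = P_{\F(T)}(u)$; this is well defined because $H$ is (trivially) closed and convex and $T$ is quasinonexpansive, so $\F(T)$ is closed and convex by~\cite{MR0298499}*{Theorem~1}. Put $y_n = \beta_n x_n + (1-\beta_n) T x_n$, so that $x_{n+1} = \alpha_n u + (1-\alpha_n) y_n$. First I would establish boundedness: for any $z \in \F(T)$, convexity of $\norm{\,\cdot\,}^2$ together with quasinonexpansiveness gives $\norm{y_n - z} \le \norm{x_n - z}$, whence $\norm{x_{n+1}-z} \le \alpha_n \norm{u - z} + (1-\alpha_n)\norm{x_n - z}$; by induction $\{x_n\}$, and therefore $\{Tx_n\}$ and $\{y_n\}$, are bounded. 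The goal is then to show $s_n = \norm{x_n - p}^2 \to 0$.

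The plan is to combine two estimates and feed them into a standard lemma of Saejung--Yotkaew type. Expanding $\norm{x_{n+1}-p}^2$ with the Hilbert-space inequality $\norm{a+b}^2 \le \norm{a}^2 + 2\ip{b}{a+b}$, then using $\norm{y_n - p}\le \norm{x_n-p}$ and $(1-\alpha_n)^2 \le 1-\alpha_n$, yields
\[
 s_{n+1} \le (1-\alpha_n)s_n + 2\alpha_n\ip{u-p}{x_{n+1}-p}.
\]
Separately, expanding the two convex combinations with $\norm{t a + (1-t)b}^2 = t\norm{a}^2 + (1-t)\norm{b}^2 - t(1-t)\norm{a-b}^2$ produces the control term
\[
 (1-\alpha_n)\beta_n(1-\beta_n)\norm{x_n - Tx_n}^2 \le \alpha_n\norm{u-p}^2 + s_n - s_{n+1}.
\]

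Now I would invoke the lemma: to conclude $s_n \to 0$ it suffices to show that $\limsup_k \ip{u-p}{x_{n_k+1}-p} \le 0$ for every subsequence $\{n_k\}$ with $\liminf_k (s_{n_k+1}-s_{n_k}) \ge 0$. Fix such a subsequence. Since $\alpha_n \to 0$ and $\limsup_k(s_{n_k}-s_{n_k+1})\le 0$, the second estimate forces $\limsup_k (1-\alpha_{n_k})\beta_{n_k}(1-\beta_{n_k})\norm{x_{n_k}-Tx_{n_k}}^2 \le 0$; because $\liminf_n \beta_n(1-\beta_n)>0$, this gives asymptotic regularity $\norm{x_{n_k}-Tx_{n_k}} \to 0$ along the subsequence. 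Passing to a further weakly convergent subsequence $x_{n_{k_j}} \rightharpoonup w$, the point $w$ is an asymptotic fixed point, so $w \in \Fhat(T) = \F(T)$ by hypothesis, and the variational characterization of the projection gives $\ip{u-p}{w-p}\le 0$. Since $x_{n+1}-x_n = \alpha_n(u-x_n)+(1-\alpha_n)(1-\beta_n)(Tx_n - x_n) \to 0$ along the subsequence, also $x_{n_{k_j}+1}\rightharpoonup w$, and a routine subsequence-of-subsequence argument upgrades this to $\limsup_k \ip{u-p}{x_{n_k+1}-p}\le 0$. The lemma then yields $x_n \to p = P_{\F(T)}(u)$.

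The main obstacle is that $\{\norm{x_n - p}\}$ need not be monotone, so one cannot extract asymptotic regularity $\norm{x_n - Tx_n}\to 0$ for the full sequence directly from the second estimate. The Saejung--Yotkaew subsequence principle (equivalently, Mainge's two-case analysis) is what circumvents this: asymptotic regularity is only needed along those subsequences on which $s_n$ is asymptotically nondecreasing, and precisely there the control term dominates. The hypothesis $\Fhat(T)=\F(T)$ enters exactly once, to ensure the weak subsequential limit $w$ lies in $\F(T)$, so that $\ip{u-p}{w-p}\le 0$.
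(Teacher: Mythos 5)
Your proof is correct, but it takes a genuinely different route from the paper: the paper offers no proof of Theorem~\ref{t:AKT2012JNAO} at all, presenting it as a direct consequence of \cite{MR2960628}*{Theorem 5.5} (see also \cite{MR2773244}*{Theorem 3.4}), i.e., as a specialization of a general strong convergence theorem for Halpern-type iterations built from strongly (relatively) nonexpansive sequences; in that framework the mappings $\beta_n I + (1-\beta_n)T$ form such a sequence with common fixed point set $\F(T)$ precisely because $\liminf_n \beta_n(1-\beta_n)>0$, and the cited theorem does the rest. You instead give a self-contained argument: boundedness by induction; the two standard Hilbert-space estimates (the inequality $\norm{a+b}^2 \le \norm{a}^2 + 2\ip{b}{a+b}$ giving the recursion $s_{n+1}\le(1-\alpha_n)s_n + 2\alpha_n\ip{u-p}{x_{n+1}-p}$, and the convex-combination identity isolating the residual $(1-\alpha_n)\beta_n(1-\beta_n)\norm{x_n-Tx_n}^2$); and the Saejung--Yotkaew subsequence principle (equivalently Maing\'e's technique) to dispense with monotonicity of $\norm{x_n-p}$, with the hypothesis $\Fhat(T)=\F(T)$ used exactly once to place weak cluster points in $\F(T)$. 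All your steps check out: the derivation of asymptotic regularity along the relevant subsequences, the passage $x_{n_k+1}-x_{n_k}\to 0$ needed to transfer the weak limit to $\{x_{n_k+1}\}$, and the projection inequality $\ip{u-p}{w-p}\le 0$. What each approach buys: yours is elementary and self-contained modulo one well-known lemma, and it makes visible where each hypothesis enters; the paper's citation-based route is shorter and situates the theorem inside an established general framework, which is reasonable since the theorem is only a tool there. Two minor points worth recording if you write this up: the subsequence lemma is usually stated for $\alpha_n\in(0,1)$ while here $\alpha_n\in(0,1]$, but since $\alpha_n\to 0$ all but finitely many terms lie in $(0,1)$, so you may discard an initial segment; and the well-definedness of $p=P_{\F(T)}(u)$ via \cite{MR0298499}*{Theorem~1} indeed applies because the domain $H$ is closed and convex.
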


\begin{remark}
 In Theorem~\ref{t:AKT2012JNAO}, 
 the condition $\liminf_{n} \beta_n (1-\beta_n)> 0$ is equivalent to
 the following: $\liminf_{n} \beta_n > 0$ and $\limsup_{n}\beta_n <1$. 
\end{remark}

The following theorem is a direct consequence
of~\cite{MR2058234}*{Theorem 3.2}; 
see also~\cite{MR2529497}. 

\begin{theorem} \label{t:MT2004}
 Let $H$ be a Hilbert space, 
 $T \colon H \to H$ a quasinonexpansive mapping, 
 $\{\alpha_n \}$ a sequence in $[0,1]$, and
 $\{x_n\}$ a sequence defined by $x_1 \in H$ and 
 \[
 x_{n+1} = \alpha_n x_n + (1 - \alpha_n) Tx_n
 \]
 for $n\in \N$. 
 Suppose that $\Fhat(T) = \F(T)$
 and $\liminf_n \alpha_n (1-\alpha_n) > 0$. 
 Then $\{ x_n \}$ converges weakly to some point $w \in \F(T)$. 
\end{theorem}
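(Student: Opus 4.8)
The plan is to carry out the classical Krasnoselskii--Mann analysis: extract asymptotic regularity from a Fej\'er-type inequality, then pin down a single weak limit using Opial's property. First I would fix an arbitrary $z\in\F(T)$ (nonempty since $T$ is quasinonexpansive) and expand the iterate by the Hilbert-space convexity identity $\norm{\lambda a+(1-\lambda)b}^2=\lambda\norm{a}^2+(1-\lambda)\norm{b}^2-\lambda(1-\lambda)\norm{a-b}^2$ with $a=x_n-z$, $b=Tx_n-z$, and $\lambda=\alpha_n$, obtaining
\[
 \norm{x_{n+1}-z}^2=\alpha_n\norm{x_n-z}^2+(1-\alpha_n)\norm{Tx_n-z}^2-\alpha_n(1-\alpha_n)\norm{x_n-Tx_n}^2.
\]
Since $T$ is quasinonexpansive and $z\in\F(T)$, we have $\norm{Tx_n-z}\le\norm{x_n-z}$, so the first two terms collapse to give
\[
 \norm{x_{n+1}-z}^2\le\norm{x_n-z}^2-\alpha_n(1-\alpha_n)\norm{x_n-Tx_n}^2.
\]

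Two facts follow. The sequence $\{\norm{x_n-z}\}$ is nonincreasing, hence convergent, so $\{x_n\}$ is bounded; and telescoping the second inequality yields $\sum_n\alpha_n(1-\alpha_n)\norm{x_n-Tx_n}^2\le\norm{x_1-z}^2<\infty$. Because $\liminf_n\alpha_n(1-\alpha_n)>0$, there is a constant $c>0$ with $\alpha_n(1-\alpha_n)\ge c$ for all large $n$; hence $\sum_n\norm{x_n-Tx_n}^2<\infty$, and in particular $x_n-Tx_n\to0$.

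Next I would locate the weak cluster points. Boundedness of $\{x_n\}$ gives a subsequence $x_{n_k}\rightharpoonup w$. Since $x_{n_k}-Tx_{n_k}\to0$, the point $w$ is an asymptotic fixed point of $T$, so $w\in\Fhat(T)=\F(T)$ by hypothesis. Thus every weak subsequential limit of $\{x_n\}$ belongs to $\F(T)$.

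The hard part---really the only delicate step---is ruling out two distinct weak cluster points, and this is exactly where Opial's property of a Hilbert space is needed. Having shown that $\lim_n\norm{x_n-z}$ exists for every $z\in\F(T)$, I would argue by contradiction: if $x_{n_k}\rightharpoonup w$ and $x_{m_j}\rightharpoonup w'$ with $w\ne w'$ and $w,w'\in\F(T)$, then applying Opial's inequality along each subsequence gives $\lim_n\norm{x_n-w}<\lim_n\norm{x_n-w'}<\lim_n\norm{x_n-w}$, a contradiction. Hence the weak cluster point is unique, and therefore $x_n\rightharpoonup w$ for some $w\in\F(T)$, which is the desired conclusion.
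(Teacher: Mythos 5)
Your proof is correct, but it takes a different route from the paper: the paper offers no argument at all for this theorem, instead deriving it as ``a direct consequence'' of a Banach-space result of Matsushita and Takahashi for relatively nonexpansive mappings (their Theorem~3.2), which in the Hilbert-space setting specializes to exactly this statement because relative nonexpansiveness there reduces to quasinonexpansiveness together with $\Fhat(T)=\F(T)$. What you have written is the self-contained classical Krasnosel'ski\u{\i}--Mann/Opial argument: the convexity identity $\norm{\lambda a+(1-\lambda)b}^2=\lambda\norm{a}^2+(1-\lambda)\norm{b}^2-\lambda(1-\lambda)\norm{a-b}^2$ plus quasinonexpansiveness yields Fej\'er monotonicity and the summability of $\alpha_n(1-\alpha_n)\norm{x_n-Tx_n}^2$, the liminf condition upgrades this to $x_n-Tx_n\to 0$, the hypothesis $\Fhat(T)=\F(T)$ places every weak cluster point in $\F(T)$ (this is precisely where that hypothesis is consumed, and you use it correctly), and Opial's property together with the existence of $\lim_n\norm{x_n-z}$ for each $z\in\F(T)$ forces uniqueness of the weak cluster point. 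All steps are sound, including the slightly delicate one where Opial's strict inequality is applied along two subsequences whose liminfs are genuine full-sequence limits. The trade-off is the usual one: the paper's citation is shorter and situates the theorem inside a more general Banach-space framework, while your argument is elementary, transparent, and makes visible exactly which hypothesis does which job---in particular, that demiclosedness ($\Fhat(T)=\F(T)$) is needed only to identify weak cluster points, not for the asymptotic regularity.
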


\section{Quasinonexpansive extensions}

In this section, we prove that, under appropriate assumptions, a mapping
with an attractive point has a quasinonexpansive extension such that 
the set of fixed points (or asymptotic fixed points) is equal to that of
attractive points (Lemma~\ref{l:demiclosed}). 
We begin with the following: 

\begin{lemma}\label{l:extension}
 Let $H$ be a Hilbert space, $C$ a nonempty subset of $H$,
 $T\colon C \to H$ a mapping with an attractive point, 
 and $\tilde{T} \colon H \to H$ a mapping defined by 
 \begin{equation}\label{e:extension}
  \tilde{T}x = 
   \begin{cases}
    Tx, & x\in C;\\
    P_{\A(T)}(x), & \text{otherwise.} 
   \end{cases}
 \end{equation}
 Then $\tilde{T}$ is an extension of $T$ and quasinonexpansive with
 respect to $\A(T)$. 
 Moreover, $\A(T) \subset \F(\tilde{T})$. 
\end{lemma}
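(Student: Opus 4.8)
The plan is to verify three things in sequence: that $\tilde{T}$ extends $T$, that $\tilde{T}$ is quasinonexpansive with respect to $\A(T)$, and that $\A(T) \subset \F(\tilde{T})$. The first is immediate from the definition~\eqref{e:extension}, since $\tilde{T}x = Tx$ for every $x \in C$; I would dispatch this in one line.

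Let me think about the main content. First, note that since $T$ has an attractive point, $\A(T) \ne \emptyset$, and as recorded in the preliminaries $\A(T)$ is closed and convex, so the metric projection $P_{\A(T)}$ is well defined and nonexpansive. For the quasinonexpansive-with-respect-to-$\A(T)$ claim, I would fix $x \in H$ and $z \in \A(T)$ and bound $\norm{\tilde{T}x - z}$ by a case split matching the definition of $\tilde{T}$. If $x \in C$, then $\tilde{T}x = Tx$, and since $z$ is an attractive point of $T$ we have $\norm{\tilde{T}x - z} = \norm{Tx - z} \leq \norm{x - z}$ directly. If $x \notin C$, then $\tilde{T}x = P_{\A(T)}(x)$; here I would use that $z \in \A(T)$ is a fixed point of the projection, i.e.\ $P_{\A(T)}(z) = z$, together with nonexpansiveness of $P_{\A(T)}$, to get $\norm{\tilde{T}x - z} = \norm{P_{\A(T)}(x) - P_{\A(T)}(z)} \leq \norm{x - z}$. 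In both cases the bound $\norm{\tilde{T}x - z} \leq \norm{x - z}$ holds, which is exactly quasinonexpansiveness with respect to $\A(T)$.

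Finally, for $\A(T) \subset \F(\tilde{T})$, I would take $z \in \A(T)$ and show $\tilde{T}z = z$, again splitting on whether $z \in C$. If $z \in C$, then $z \in C \cap \A(T) \subset \F(T)$ by the inclusion noted in the preliminaries, so $Tz = z$ and hence $\tilde{T}z = Tz = z$. If $z \notin C$, then $\tilde{T}z = P_{\A(T)}(z) = z$ since $z$ already lies in the closed convex set $\A(T)$ and the projection fixes its own points. Either way $z \in \F(\tilde{T})$.

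I do not anticipate a genuine obstacle here: the argument is a careful case analysis resting on two standard facts, namely that $P_D$ is nonexpansive and fixes points of $D$, and on the two inclusions about $\A(T)$ already established in Section~2. The only point requiring a little care is ensuring the case split is exhaustive and that the projection is legitimately defined, which hinges on $\A(T)$ being nonempty (guaranteed by the hypothesis that $T$ has an attractive point) as well as closed and convex; I would state these prerequisites explicitly before the case analysis.
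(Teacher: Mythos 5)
Your proof is correct and follows essentially the same argument as the paper: a case analysis using the attractive-point inequality when the argument lies in $C$ and the nonexpansiveness of $P_{\A(T)}$ (together with $P_{\A(T)}(z)=z$ for $z\in\A(T)$) otherwise. In fact your version is slightly cleaner, since you split the quasinonexpansiveness argument correctly on whether $x\in C$ (the paper's text mistakenly writes the case split on $z$), and your use of $C\cap\A(T)\subset\F(T)$ in the last step is an equivalent one-line substitute for the paper's appeal to the just-established quasinonexpansiveness.
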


\begin{proof}
 By the definition of $\tilde{T}$,  
 it is clear that $\tilde{T}$ is an extension of $T$. 
 We show that $\tilde{T}$ is quasinonexpansive with respect to $\A(T)$. 
 Let $x \in H$ and $z \in \A(T)$. 
 Suppose that $z \in C$. Since $z$ is an attractive point of $T$, we
 have $\bignorm{\tilde{T}x-z}=\norm{Tx-z} \leq \norm{x-z}$. 
 On the other hand, suppose that $z \notin C$. 
 Since $P_{\A(T)}$ is nonexpansive and $z = P_{\A(T)}(z)$, we have
 \[
 \bignorm{\tilde{T} x - z}
 = \norm{P_{\A(T)} (x) - P_{\A(T)} (z)} \leq \norm{x -z}. 
 \]
 Therefore, $\tilde{T}$ is quasinonexpansive with respect to $\A(T)$. 

 We next show that $\A(T) \subset \F(\tilde{T})$. 
 Let $z \in \A(T)$. Suppose that $z \in C$. 
 Since $\tilde{T}$ is quasinonexpansive with respect to $\A(T)$, 
 we have $\bignorm{\tilde{T}z-z} \leq \norm{z-z} = 0$, and hence 
 $z \in \F(\tilde{T})$. 
 On the other hand, suppose that $z \notin C$. 
 Then we have $\tilde{T}z = P_{\A(T)}(z) = z$, and hence $z \in
 \F(\tilde{T})$. 
 As a result, we conclude that $\A(T) \subset \F(\tilde{T})$. 
\end{proof}

\begin{remark}
 In Lemma~\ref{l:extension}, 
 one can verify that $\A(\tilde{T}) = \A(T)$. 
\end{remark}

The following example shows that
$\A(T) \ne \F(\tilde{T})$ in Lemma~\ref{l:extension}. 

\begin{example}
 Let $H=\R$ and $C= \R \setminus \{0\}$. 
 Let $T\colon C \to C$ be a mapping defined by 
 \[
 Tx = 
 \begin{cases}
  1, & x = 1;\\
  -x, &\text{otherwise}. 
 \end{cases}
 \]
 Then $\F(T) = \{ 1\}$ and $\A(T) = \{0\}$. 
 Moreover, let $\tilde{T} \colon H \to H$ be a mapping defined 
 by~\eqref{e:extension}, that is, 
 \[
 \tilde{T}x = 
 \begin{cases}
  0,  & x=0; \\
  Tx, & \text{otherwise}. 
 \end{cases}
 \]
 Then $\F(\tilde{T}) = \{0,1\}$. Therefore, $\A(T) \ne \F(\tilde{T})$. 
\end{example}

\begin{proof}
 The equality $\F(T) = \{1\}$ is obvious. 
 We first show that $\A(T) = \{0\}$. 
 Let $x \in C$. 
 If $x =1$, then 
 $\abs{Tx - 0} = \abs{1} = \abs{x - 0}$; 
 otherwise $\abs{Tx - 0} = \abs{-x} = \abs{x - 0}$. 
 Thus $0 \in \A(T)$. 
 On the other hand, suppose that $z \in \A(T)$ and $z \ne 0$. 
 Then $z \in C$. As a result, we have 
 $z \in C \cap \A(T) \subset \F(T)$, and hence $z=1$. 
 However, since
 \[
  \abs{T(1/2) -1} = \abs{-1/2 -1} = 3/2 > 1/2 = \abs{1/2-1}, 
 \]
 we have $z \notin \A(T)$, which is a contradiction. 
 Therefore we conclude that $\A(T) = \{0\}$. 
 
 We next show that $\F(\tilde{T}) = \{0,1\}$. 
 By definition, $\tilde{T} 0 = 0$ and $\tilde{T}1 = T1 = 1$. 
 Thus $\{ 0,1 \} \subset \F(\tilde{T})$. 
 If $z \notin \{ 0,1\}$, then we have $\tilde{T}z = Tz = -z \ne z$. 
 This means that $\{ 0, 1\} \supset \F(\tilde{T})$. 
\end{proof}

\begin{lemma}\label{l:demiclosed}
 Let $H$ be a Hilbert space, $C$ a nonempty subset of $H$, 
 $T\colon C \to H$ a mapping with an attractive point, 
 and $\tilde{T} \colon H \to H$ a mapping defined by \eqref{e:extension}. 
 Then the following hold: 
\begin{enumerate}
 \item If $\F(T) \subset \A(T)$, then 
       $\A(T) = \F(\tilde{T})$ and $\tilde{T}$ is quasinonexpansive; 
 \item if $\hat{\F}(T) \subset \A(T)$, then 
       $\hat{\F}(\tilde{T}) = \F(\tilde{T})$, that is, 
       $I- \tilde{T}$ is demiclosed at $0$.
\end{enumerate}
\end{lemma}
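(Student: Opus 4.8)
The plan is to establish each part by proving only the single non-trivial inclusion, since in both cases the reverse inclusion is already at hand. Throughout I rely on Lemma~\ref{l:extension}, which gives $\A(T) \subset \F(\tilde{T})$ and that $\tilde{T}$ is quasinonexpansive with respect to $\A(T)$, together with the standard fact that the fixed point set of the metric projection $P_{\A(T)}$ equals $\A(T)$, because $\A(T)$ is closed and convex.

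For part~(1), Lemma~\ref{l:extension} already yields $\A(T) \subset \F(\tilde{T})$, so it suffices to prove $\F(\tilde{T}) \subset \A(T)$. I would take $z \in \F(\tilde{T})$ and split into two cases according to whether $z \in C$. If $z \notin C$, then $z = \tilde{T}z = P_{\A(T)}(z)$ forces $z \in \A(T)$; if $z \in C$, then $z = \tilde{T}z = Tz$ gives $z \in \F(T)$, and the hypothesis $\F(T) \subset \A(T)$ closes the case. This yields $\A(T) = \F(\tilde{T})$. Since $T$ has an attractive point, $\F(\tilde{T}) = \A(T)$ is nonempty, and the quasinonexpansiveness of $\tilde{T}$ with respect to $\A(T) = \F(\tilde{T})$ supplied by Lemma~\ref{l:extension} then says precisely that $\tilde{T}$ is quasinonexpansive.

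For part~(2), recall that $\Fhat(\tilde{T}) = \F(\tilde{T})$ is equivalent to demiclosedness of $I - \tilde{T}$ at $0$, and that $\F(\tilde{T}) \subset \Fhat(\tilde{T})$ always holds; thus it suffices to prove $\Fhat(\tilde{T}) \subset \F(\tilde{T})$. Given $z \in \Fhat(\tilde{T})$, I choose a sequence $\{x_n\}$ in $H$ with $x_n - \tilde{T}x_n \to 0$ and $x_n \rightharpoonup z$. The idea is to separate the indices according to whether $x_n$ lies in $C$: at least one of the index sets $\{n : x_n \in C\}$ and $\{n : x_n \notin C\}$ is infinite, and I pass to the corresponding subsequence, which still converges weakly to $z$.

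In the first case, $\tilde{T}x_n = Tx_n$ along the subsequence, so $x_n - Tx_n \to 0$ and $x_n \rightharpoonup z$; hence $z \in \Fhat(T) \subset \A(T) \subset \F(\tilde{T})$ by the hypothesis and Lemma~\ref{l:extension}. In the second case, $\tilde{T}x_n = P_{\A(T)}(x_n)$ along the subsequence, so $x_n - P_{\A(T)}(x_n) \to 0$ and $x_n \rightharpoonup z$; since $P_{\A(T)}$ is nonexpansive with domain the closed convex set $H$, the mapping $I - P_{\A(T)}$ is demiclosed at $0$, which yields $P_{\A(T)}(z) = z$, that is, $z \in \A(T) \subset \F(\tilde{T})$. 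Either way $z \in \F(\tilde{T})$, giving the desired inclusion. The main obstacle is the mixed behaviour of the approximating sequence under the piecewise definition of $\tilde{T}$; the device of passing to an infinite subsequence lying entirely inside or entirely outside $C$ is what lets me invoke the two distinct demiclosedness-type arguments cleanly.
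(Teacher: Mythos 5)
Your proposal is correct and follows essentially the same route as the paper: part (1) by the same two-case analysis ($z \in C$ versus $z \notin C$) on a fixed point of $\tilde{T}$, and part (2) by passing to a subsequence lying entirely in $C$ or entirely outside $C$, handling the first case via the hypothesis $\Fhat(T) \subset \A(T)$ and the second via demiclosedness of $I - P_{\A(T)}$ coming from nonexpansiveness of the metric projection. The only cosmetic difference is that you conclude part (2) directly from the inclusion $\A(T) \subset \F(\tilde{T})$ of Lemma~\ref{l:extension}, whereas the paper first invokes part (1) to identify $\F(\tilde{T}) = \A(T)$; both are equally valid.
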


\begin{proof}
 We first show (1). 
 We know from Lemma~\ref{l:extension} that $\tilde{T}$ is
 quasinonexpansive with respect to $\A(T)$, and that 
 $\A(T) \subset \F(\tilde{T})$. 
 Thus it is enough to show that $\A(T) \supset \F(\tilde{T})$. 
 Let $z \in \F(\tilde{T})$. 
 If $z \in C$, then $z = \tilde{T}z = Tz$, 
 and hence $z \in \F(T) \subset \A(T)$. 
 If $z \notin C$, then $z = \tilde{T}z = P_{\A(T)}(z) \in \A(T)$. 
 Consequently, it turns out that $\A(T) \supset \F(\tilde{T})$. 

 We next show (2). 
 Since $\F(T) \subset \Fhat(T) \subset \A(T)$, 
 it follows from~(1) that $\A(T) = \F(\tilde{T})$. Thus it is enough to
 prove that $\Fhat (\tilde{T}) \subset \A(T)$.
 Let $z \in \Fhat(\tilde{T})$. Then there exists a sequence $\{x_n\}$ in
 $H$ such that $x_n - \tilde{T} x_n \to 0$ and $x_n \rightharpoonup z$. 
 We consider two cases, which might not be exclusive. 
 (i)~Suppose that there exists a subsequence $\{x_{n_i}\}$ of $\{x_n\}$
 such that $x_{n_i} \in C$ for all $i \in \N$. 
 Then it follows that 
 $x_{n_i} - T x_{n_i} = x_{n_i} - \tilde{T} x_{n_i} \to 0$ 
 and $x_{n_i} \rightharpoonup z$. 
 Thus, by assumption, we deduce that $z \in \Fhat(T) \subset \A(T)$. 
 (ii)~Suppose that there exists a subsequence
 $\{x_{n_i}\}$ of
 $\{x_n\}$ such that $x_{n_i} \notin C$ for all $i \in \N$. 
 Then $x_{n_i} - P_{\A(T)}(x_{n_i}) 
 = x_{n_i} - \tilde{T} x_{n_i} \to 0$ and $x_{n_i} \rightharpoonup z$. 
 Since $P_{\A(T)}$ is a nonexpansive mapping on $H$, 
 $I-P_{\A(T)}$ is demiclosed at $0$. 
 Hence $z \in \F(P_{\A(T)}) = \A(T)$.
 This completes the proof. 
\end{proof}

\section{Approximation of attractive points}

In this section, 
using lemmas in the previous section (Lemmas~\ref{l:extension}
and~\ref{l:demiclosed}) and convergence theorems for quasinonexpansive
mappings (Theorems~\ref{t:AKT2012JNAO} and~\ref{t:MT2004}), 
we obtain two convergence theorems for attractive points of 
a mapping satisfying the condition that every asymptotic fixed point is
an attractive point, and as corollaries of them, we also obtain
convergence results for attractive points of generalized hybrid
mappings. 

\begin{theorem}\label{t:strong}
 Let $H$ be a Hilbert space, $C$ a nonempty convex subset of $H$, 
 $T\colon C \to C$ a mapping with an attractive point, 
 $\{\alpha_n \}$ a sequence in $(0,1]$, 
 $\{\beta_n\}$ a sequence in $[0,1]$, 
 and $\{x_n\}$ a sequence in $C$ defined by $u, x_1 \in C$ and
 \begin{equation}\label{e:halpern}
  x_{n+1} = \alpha_n u + (1-\alpha_n) [\beta_n x_n + (1-\beta_n)Tx_n]   
 \end{equation}
 for $n \in \N$. 
 Suppose that $\sum_n \alpha_n = \infty$, 
 $\lim_n \alpha_n = 0$, and $\liminf_n \beta_n (1-\beta_n) >0$. 
 If $\Fhat(T) \subset \A(T)$, 
 then $\{x_n\}$ converges strongly to $P_{\A(T)}(u)$. 
\end{theorem}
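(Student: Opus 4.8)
**The strategy is to reduce everything to Theorem~\ref{t:AKT2012JNAO} by passing to the quasinonexpansive extension $\tilde T$.**

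The plan is to observe that the iteration \eqref{e:halpern} defining $\{x_n\}$ has exactly the form appearing in Theorem~\ref{t:AKT2012JNAO}, except that Theorem~\ref{t:AKT2012JNAO} is stated for a mapping on all of $H$, whereas here $T$ is only defined on $C$. I would therefore introduce the extension $\tilde T\colon H\to H$ defined by \eqref{e:extension} and try to replace $T$ by $\tilde T$ in the recursion. The first step is to verify that $\{x_n\}$, which a priori lives in $C$, actually coincides with the sequence generated by the same recursion with $Tx_n$ replaced by $\tilde Tx_n$. Since $C$ is convex and $u,x_1\in C$, an easy induction shows $x_n\in C$ for all $n$: if $x_n\in C$ then $Tx_n\in C$ (because $T\colon C\to C$), so $\beta_n x_n+(1-\beta_n)Tx_n\in C$ by convexity, and then $x_{n+1}$ is a convex combination of $u\in C$ and this point, hence lies in $C$. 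Consequently $\tilde Tx_n=Tx_n$ for every $n$, so $\{x_n\}$ is precisely the sequence obtained by applying the recursion of Theorem~\ref{t:AKT2012JNAO} to the mapping $\tilde T$ with the same data $u$, $x_1$, $\{\alpha_n\}$, $\{\beta_n\}$.

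The second step is to check that $\tilde T$ satisfies the hypotheses of Theorem~\ref{t:AKT2012JNAO}. The assumption $\Fhat(T)\subset\A(T)$ is exactly the hypothesis of Lemma~\ref{l:demiclosed}(2), so I may invoke both parts of that lemma: part~(1) gives that $\tilde T$ is quasinonexpansive with $\A(T)=\F(\tilde T)$, and part~(2) gives $\Fhat(\tilde T)=\F(\tilde T)$, which is precisely the standing hypothesis $\Fhat(\tilde T)=\F(\tilde T)$ of Theorem~\ref{t:AKT2012JNAO}. The conditions $\alpha_n\to0$, $\sum_n\alpha_n=\infty$, and $\liminf_n\beta_n(1-\beta_n)>0$ are carried over verbatim.

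Applying Theorem~\ref{t:AKT2012JNAO} to $\tilde T$ then yields that $\{x_n\}$ converges strongly to $P_{\F(\tilde T)}(u)$. The final step is to rewrite the limit: since $\F(\tilde T)=\A(T)$ by Lemma~\ref{l:demiclosed}(1), we have $P_{\F(\tilde T)}(u)=P_{\A(T)}(u)$, which is the asserted limit. I expect the only genuine point requiring care to be the bookkeeping in the first step — namely confirming that the sequence stays in $C$ so that the extension agrees with $T$ along the iteration; once that invariance is established, the result is an immediate transfer through the two lemmas. No serious obstacle is anticipated, since the extension machinery of \S3 was built precisely to make this reduction work.
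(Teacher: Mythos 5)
Your proposal is correct and follows essentially the same route as the paper: pass to the extension $\tilde T$ defined by \eqref{e:extension}, use Lemma~\ref{l:demiclosed} (noting $\F(T)\subset\Fhat(T)\subset\A(T)$) to get that $\tilde T$ is quasinonexpansive with $\Fhat(\tilde T)=\F(\tilde T)=\A(T)$, check that the iterates remain in $C$ so the recursion can be rewritten with $\tilde T$, and conclude via Theorem~\ref{t:AKT2012JNAO}. Your explicit induction showing $x_n\in C$ merely spells out what the paper compresses into ``since $C$ is convex and $\tilde T$ is an extension of $T$.''
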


\begin{proof}
 Let $\tilde{T}\colon H\to H$ be an extension of $T$
 defined by~\eqref{e:extension}. 
 By the assumption that $\Fhat(T) \subset \A(T)$, 
 we see that $\F(T) \subset \Fhat(T) \subset \A(T)$. 
 Thus Lemma~\ref{l:demiclosed} implies that
 $\Fhat(\tilde{T}) = \F(\tilde{T}) = \A(T)$ and 
 $\tilde{T}$ is quasinonexpansive. 
 Moreover, since $C$ is convex and $\tilde{T}$ is an extension of $T$, 
 it follows that
 \[
 x_{n+1} = \alpha_n u + (1-\alpha_n) [\beta_n x_n + (1-\beta_n)
 \tilde{T}x_n]   
 \]
 for all $n \in \N$. 
 Therefore we deduce from Theorem~\ref{t:AKT2012JNAO} that 
 $x_n \to P_{\F(\tilde{T})}(u) = P_{\A(T)}(u)$.
\end{proof}

Using Theorem~\ref{t:strong} and Lemma~\ref{l:twy15:demiclosed}, 
we obtain the following corollary; 
see Takahashi, Wong, and Yao~\cite{MR3397117}*{Theorem 3.2}.

\begin{corollary}\label{c:Takahashi}
 Let $H$, $C$, $\{\alpha_n \}$, and $\{\beta_n\}$ be the same as in
 Theorem~\ref{t:strong}. Let 
 $T\colon C \to C$ be a generalized hybrid mapping with an attractive
 point and $\{x_n\}$ a sequence in $C$ defined by $u, x_1 \in C$ and
 \eqref{e:halpern} for $n \in \N$. 
 Then $\{x_n\}$ converges strongly to $P_{\A(T)}(u)$. 
\end{corollary}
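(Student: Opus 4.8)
The plan is to recognize this corollary as an immediate specialization of Theorem~\ref{t:strong}: all the machinery has already been built, and the only thing left to check is the single structural hypothesis $\Fhat(T) \subset \A(T)$.

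First I would observe that the data of the corollary match the hypotheses of Theorem~\ref{t:strong} essentially by fiat. The sequences $\{\alpha_n\}$ and $\{\beta_n\}$ are taken to be the same, so the quantitative conditions $\sum_n \alpha_n = \infty$, $\lim_n \alpha_n = 0$, and $\liminf_n \beta_n(1-\beta_n) > 0$ hold verbatim; the iteration is again \eqref{e:halpern}; and $T\colon C \to C$ has an attractive point, so $\A(T) \neq \emptyset$ and $C$ is convex as required. Hence the entire setup of Theorem~\ref{t:strong} is in force, with the sole remaining item being the condition $\Fhat(T) \subset \A(T)$.

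To supply that condition, I would invoke Lemma~\ref{l:twy15:demiclosed}. Because $T$ is assumed to be generalized hybrid, that lemma guarantees precisely that whenever $\{x_n\}$ is a sequence in $C$ with $x_n - Tx_n \to 0$ and $x_n \rightharpoonup z$, one has $z \in \A(T)$; in other words $\Fhat(T) \subset \A(T)$, which is exactly the hypothesis needed. With this inclusion in hand, Theorem~\ref{t:strong} applies directly and yields $x_n \to P_{\A(T)}(u)$, completing the argument.

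I do not expect any genuine obstacle here: the substantive work was carried out earlier, in Lemma~\ref{l:demiclosed} (constructing the quasinonexpansive extension $\tilde{T}$ with $\Fhat(\tilde{T}) = \F(\tilde{T}) = \A(T)$ and transferring demiclosedness) and in Theorem~\ref{t:strong} (reducing the attractive-point iteration to the fixed-point iteration of $\tilde{T}$ and appealing to Theorem~\ref{t:AKT2012JNAO}). The only content of the corollary is the observation that generalized hybrid mappings automatically satisfy the abstract hypothesis $\Fhat(T) \subset \A(T)$, so the proof should amount to little more than citing Lemma~\ref{l:twy15:demiclosed} and then Theorem~\ref{t:strong}.
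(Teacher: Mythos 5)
Your proposal is correct and matches the paper's own proof exactly: the paper likewise cites Lemma~\ref{l:twy15:demiclosed} to obtain $\Fhat(T) \subset \A(T)$ for the generalized hybrid mapping $T$, and then applies Theorem~\ref{t:strong} to conclude $x_n \to P_{\A(T)}(u)$. Your additional verification that the remaining hypotheses transfer verbatim is sound and only makes explicit what the paper leaves implicit.
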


\begin{proof}
 Lemma~\ref{l:twy15:demiclosed} shows that $\Fhat(T) \subset \A(T)$. 
 Thus Theorem~\ref{t:strong} implies the conclusion. 
\end{proof}

\begin{remark}
 Corollary~\ref{c:Takahashi} is almost the same
 as~\cite{MR3397117}*{Theorem 3.2}, except that $\{\alpha_n\}$ and
 $\{\beta_n\}$ are assumed to be sequences in $(0,1)$ 
 in~\cite{MR3397117}*{Theorem 3.2}. 
\end{remark}

\begin{theorem}\label{t:weak}
 Let $H$ be a Hilbert space, $C$ a nonempty convex subset of $H$, 
 $T\colon C \to C$ a mapping with an attractive point, 
 $\{\alpha_n \}$ a sequence in $[0,1]$, 
 and $\{x_n\}$ a sequence in $C$ defined by $x_1 \in C$ and
 \begin{equation}\label{e:mann}
  x_{n+1} = \alpha_n x_n + (1-\alpha_n) Tx_n
 \end{equation}
 for $n \in \N$. 
 Suppose that $\liminf_n \alpha_n (1-\alpha_n) >0$. 
 If $\Fhat(T) \subset \A(T)$, 
 then $\{x_n\}$ converges weakly to some point in $\A(T)$. 
\end{theorem}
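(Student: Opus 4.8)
The plan is to mirror the proof of Theorem~\ref{t:strong}, replacing the strong-convergence engine (Theorem~\ref{t:AKT2012JNAO}) with the weak-convergence engine (Theorem~\ref{t:MT2004}). First I would introduce the quasinonexpansive extension $\tilde T\colon H \to H$ defined by~\eqref{e:extension}. From the hypothesis $\Fhat(T) \subset \A(T)$ the chain $\F(T) \subset \Fhat(T) \subset \A(T)$ follows, which is exactly what is needed to apply both parts of Lemma~\ref{l:demiclosed}. That lemma then yields $\F(\tilde T) = \A(T)$, the fact that $\tilde T$ is quasinonexpansive, and the demiclosedness-type identity $\Fhat(\tilde T) = \F(\tilde T)$.

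Next I would verify that the given iteration~\eqref{e:mann} in $C$ coincides with the $\tilde T$-iteration appearing in Theorem~\ref{t:MT2004}. Since $T$ maps $C$ into $C$ and $C$ is convex, a straightforward induction shows $x_n \in C$ for all $n$: if $x_n \in C$ then $Tx_n \in C$, so the convex combination $x_{n+1} = \alpha_n x_n + (1-\alpha_n)Tx_n$ again lies in $C$. Because $\tilde T$ agrees with $T$ on $C$, we have $\tilde T x_n = Tx_n$ throughout, and hence $x_{n+1} = \alpha_n x_n + (1-\alpha_n)\tilde T x_n$ for every $n$; that is, $\{x_n\}$ is precisely the sequence generated by $\tilde T$ in the sense of Theorem~\ref{t:MT2004}.

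Finally, the hypothesis $\liminf_n \alpha_n(1-\alpha_n) > 0$ and the identity $\Fhat(\tilde T) = \F(\tilde T)$ secured above are exactly the remaining conditions of Theorem~\ref{t:MT2004}, so that theorem delivers weak convergence of $\{x_n\}$ to some $w \in \F(\tilde T) = \A(T)$, as required. I anticipate no genuine obstacle: the argument is a direct transcription of the proof of Theorem~\ref{t:strong}, and the only substantive point is the invariance of $C$ under the recursion, which guarantees that the $C$-valued iteration and the $H$-valued $\tilde T$-iteration agree.
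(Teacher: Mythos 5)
Your proposal is correct and follows essentially the same route as the paper: introduce the extension $\tilde{T}$ from~\eqref{e:extension}, invoke Lemma~\ref{l:demiclosed} to get that $\tilde{T}$ is quasinonexpansive with $\Fhat(\tilde{T}) = \F(\tilde{T}) = \A(T)$, identify the iteration~\eqref{e:mann} with the $\tilde{T}$-iteration of Theorem~\ref{t:MT2004}, and conclude. Your explicit induction showing $x_n \in C$ for all $n$ (so that $\tilde{T}x_n = Tx_n$) is exactly the detail the paper compresses into ``we can also check that.''
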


\begin{proof}
 Let $\tilde{T}\colon H\to H$ be an extension of $T$
 defined by~\eqref{e:extension}.
 As in the proof of Theorem~\ref{t:strong}, 
 Lemma~\ref{l:demiclosed} shows that a mapping $\tilde{T}$ is a
 quasinonexpansive extension of $T$, 
 and that $\Fhat(\tilde{T}) = \F(\tilde{T}) = \A(T)$. 
 We can also check that 
 \[
 x_{n+1} = \alpha_n x_n + (1-\alpha_n) \tilde{T}x_n
 \]
 for all $n \in \N$. 
 Therefore Theorem~\ref{t:MT2004} implies the conclusion. 
\end{proof}

Finally, we obtain a weak convergence result for a widely more
generalized hybrid mapping in the sense of \cite{MR3131129} as a
corollary of Theorem~\ref{t:weak}. 

Let $C$ be a nonempty subset of a Hilbert space $H$ and 
$T\colon C \to H$ a mapping. 
Recall that $T$ is \emph{widely more generalized hybrid} \cite{MR3131129}
if there exist 
$\alpha,\beta,\gamma,\delta,\epsilon,\zeta,\eta \in \R$ such that 
\begin{multline}\label{e:wmgh}
 \alpha \norm{Tx-Ty}^2 + \beta \norm{x-Ty}^2 + \gamma \norm{Tx-y}^2
 + \delta \norm{x-y}^2 \\
 +\epsilon \norm{x-Tx}^2 + \zeta \norm{y-Ty}^2 
 + \eta \norm{x-Tx - (y-Ty)}^2  \leq 0
\end{multline}
for all $x,y \in C$. 
Such a mapping $T$ is called an 
$(\alpha,\beta,\gamma,\delta,\epsilon,\zeta,\eta)$-widely more
generalized hybrid mapping. 

Using Theorem~\ref{t:weak} and \cite{MR3073499}*{Lemma 11}, we obtain
the following corollary; 
see~\cite{MR3073499}*{Theorem 14}. 

\begin{corollary}\label{c:Guu}
 Let $H$, $C$, and $\{\alpha_n \}$ be the same as in
 Theorem~\ref{t:weak}. 
 Let $T\colon C \to C$ be an
 $(\alpha,\beta,\gamma,\delta,\epsilon,\zeta,\eta)$-widely more
 generalized hybrid mapping with an attractive point and $\{x_n\}$ a
 sequence in $C$ defined by $x_1 \in C$ and~\eqref{e:mann}
 for $n \in \N$. Suppose that 
 \begin{equation}\label{e:wmgh2qn}
 \alpha + \beta + \gamma + \delta \geq 0,\, 
  \alpha + \gamma > 0, \text{ and } \epsilon + \eta \geq 0
 \end{equation}
 hold. 
 Then $\{x_n\}$ converges weakly to some point in $\A(T)$. 
\end{corollary}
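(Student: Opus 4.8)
The plan is to reduce Corollary~\ref{c:Guu} to Theorem~\ref{t:weak}, exactly as Corollary~\ref{c:Takahashi} was reduced to Theorem~\ref{t:strong}. The only hypothesis of Theorem~\ref{t:weak} that is not already assumed here is the condition $\Fhat(T) \subset \A(T)$; everything else (the convexity of $C$, the self-mapping property $T\colon C\to C$, the recursion~\eqref{e:mann}, the attractive point assumption, and $\liminf_n \alpha_n(1-\alpha_n)>0$) is carried over verbatim. So the entire content of the proof is to verify, for a widely more generalized hybrid mapping satisfying~\eqref{e:wmgh2qn}, that every asymptotic fixed point is an attractive point.

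First I would invoke \cite{MR3073499}*{Lemma 11}, which is precisely the demiclosedness-type statement for this class: under the coefficient conditions~\eqref{e:wmgh2qn}, if $\{x_n\}$ is a sequence in $C$ with $x_n - Tx_n \to 0$ and $x_n \rightharpoonup z$, then $z \in \A(T)$. This is the analogue of Lemma~\ref{l:twy15:demiclosed} for generalized hybrid mappings, and it immediately yields the inclusion $\Fhat(T) \subset \A(T)$, since an asymptotic fixed point $z$ is by definition the weak limit of such a sequence $\{x_n\}$ with $x_n - Tx_n \to 0$.

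Once $\Fhat(T) \subset \A(T)$ is in hand, I would simply apply Theorem~\ref{t:weak} to conclude that $\{x_n\}$ converges weakly to some point in $\A(T)$. No construction of the extension $\tilde T$ is needed at this level, since that machinery is already encapsulated inside the proof of Theorem~\ref{t:weak}.

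The proof is therefore a two-line deduction, and there is essentially no obstacle internal to the argument. The only substantive point is that the coefficient hypotheses~\eqref{e:wmgh2qn} are exactly those under which \cite{MR3073499}*{Lemma 11} guarantees the demiclosedness property; the whole difficulty, if any, has been offloaded to that cited lemma, whose verification belongs to the theory of widely more generalized hybrid mappings rather than to the present extension framework. Concretely, I would write:
\begin{proof}
 By \cite{MR3073499}*{Lemma 11} and the assumption~\eqref{e:wmgh2qn},
 we have $\Fhat(T) \subset \A(T)$. Thus Theorem~\ref{t:weak} implies
 the conclusion.
\end{proof}
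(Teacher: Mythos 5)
Your proposal is correct and is essentially identical to the paper's own proof: both reduce the corollary to Theorem~\ref{t:weak} by invoking \cite{MR3073499}*{Lemma 11} under the coefficient conditions~\eqref{e:wmgh2qn} to obtain $\Fhat(T) \subset \A(T)$. The paper's proof is exactly the two-line deduction you wrote, so there is nothing to add.
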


\begin{proof}
 \cite{MR3073499}*{Lemma 11} shows that $\Fhat(T) \subset \A(T)$. 
 Thus Theorem~\ref{t:weak} implies the conclusion. 
\end{proof}

\begin{remark}
 Corollary~\ref{c:Guu} is almost the same 
 as~\cite{MR3073499}*{Theorem 14}, except that
 $\alpha,\beta,\gamma,\delta,\epsilon,\zeta$, and $\eta$ are assumed to
 satisfy \eqref{e:wmgh2qn} or 
 \begin{equation}\label{e:wmgh2qn2}
  \alpha + \beta + \gamma + \delta \geq 0,\, 
   \alpha + \beta > 0, \text{ and } \zeta + \eta \geq 0
 \end{equation}
 in \cite{MR3073499}*{Theorem 14}. 
 We can confirm that the conditions~\eqref{e:wmgh2qn} 
 and~\eqref{e:wmgh2qn2} are equivalent for an
 $(\alpha,\beta,\gamma,\delta,\epsilon,\zeta,\eta)$-widely more
 generalized hybrid mapping. 
\end{remark}

\section*{Acknowledgment}

The author would like to acknowledge the financial support from
Professor Kaoru Shimizu of Chiba University. 

\begin{bibdiv}
\begin{biblist}

\bib{MR2983907}{article}{
      author={Akashi, Shigeo},
      author={Takahashi, Wataru},
       title={Strong convergence theorem for nonexpansive mappings on
  star-shaped sets in {H}ilbert spaces},
        date={2012},
        ISSN={0096-3003},
     journal={Appl. Math. Comput.},
      volume={219},
       pages={2035\ndash 2040},
         url={https://doi.org/10.1016/j.amc.2012.08.046},
}

\bib{MR2682871}{article}{
      author={Aoyama, Koji},
      author={Iemoto, Shigeru},
      author={Kohsaka, Fumiaki},
      author={Takahashi, Wataru},
       title={Fixed point and ergodic theorems for {$\lambda$}-hybrid mappings
  in {H}ilbert spaces},
        date={2010},
        ISSN={1345-4773},
     journal={J. Nonlinear Convex Anal.},
      volume={11},
       pages={335\ndash 343},
}

\bib{MR2960628}{article}{
      author={Aoyama, Koji},
      author={Kimura, Yasunori},
      author={Kohsaka, Fumiaki},
       title={Strong convergence theorems for strongly relatively nonexpansive
  sequences and applications},
        date={2012},
        ISSN={1906-9685},
     journal={J. Nonlinear Anal. Optim.},
      volume={3},
       pages={67\ndash 77},
}

\bib{MR2981792}{article}{
      author={Aoyama, Koji},
      author={Kohsaka, Fumiaki},
       title={Fixed point and mean convergence theorems for a family of
  {$\lambda$}-hybrid mappings},
        date={2011},
        ISSN={1906-9685},
     journal={J. Nonlinear Anal. Optim.},
      volume={2},
       pages={87\ndash 95},
}

\bib{MR3017202}{article}{
      author={Aoyama, Koji},
      author={Kohsaka, Fumiaki},
       title={Uniform mean convergence theorems for hybrid mappings in
  {H}ilbert spaces},
        date={2012},
        ISSN={1687-1812},
     journal={Fixed Point Theory Appl.},
       pages={2012:193, 13},
         url={http://dx.doi.org/10.1186/1687-1812-2012-193},
}

\bib{sqnIII}{article}{
      author={Aoyama, Koji},
      author={Kohsaka, Fumiaki},
       title={Strongly quasinonexpansive mappings, {III}},
        date={2020},
     journal={Linear Nonlinear Anal.},
      volume={6},
       pages={1\ndash 12},
}

\bib{MR2529497}{article}{
      author={Aoyama, Koji},
      author={Kohsaka, Fumiaki},
      author={Takahashi, Wataru},
       title={Strongly relatively nonexpansive sequences in {B}anach spaces and
  applications},
        date={2009},
        ISSN={1661-7738},
     journal={J. Fixed Point Theory Appl.},
      volume={5},
       pages={201\ndash 224},
         url={http://dx.doi.org/10.1007/s11784-009-0108-7},
}

\bib{MR0298499}{article}{
      author={Dotson, W.~G., Jr.},
       title={Fixed points of quasi-nonexpansive mappings},
        date={1972},
        ISSN={0263-6115},
     journal={J. Austral. Math. Soc.},
      volume={13},
       pages={167\ndash 170},
}

\bib{MR1074005}{book}{
      author={Goebel, Kazimierz},
      author={Kirk, W.~A.},
       title={Topics in metric fixed point theory},
      series={Cambridge Studies in Advanced Mathematics},
   publisher={Cambridge University Press, Cambridge},
        date={1990},
      volume={28},
        ISBN={0-521-38289-0},
         url={http://dx.doi.org/10.1017/CBO9780511526152},
}

\bib{MR3073499}{article}{
      author={Guu, Sy-Ming},
      author={Takahashi, Wataru},
       title={Existence and approximation of attractive points of the widely
  more generalized hybrid mappings in {H}ilbert spaces},
        date={2013},
        ISSN={1085-3375},
     journal={Abstr. Appl. Anal.},
       pages={Art. ID 904164, 10},
         url={https://doi.org/10.1155/2013/904164},
}

\bib{MR3131129}{article}{
      author={Kawasaki, Toshiharu},
      author={Takahashi, Wataru},
       title={Existence and mean approximation of fixed points of generalized
  hybrid mappings in {H}ilbert spaces},
        date={2013},
        ISSN={1345-4773},
     journal={J. Nonlinear Convex Anal.},
      volume={14},
       pages={71\ndash 87},
}

\bib{MR2761610}{article}{
      author={Kocourek, Pavel},
      author={Takahashi, Wataru},
      author={Yao, Jen-Chih},
       title={Fixed point theorems and weak convergence theorems for
  generalized hybrid mappings in {H}ilbert spaces},
        date={2010},
        ISSN={1027-5487},
     journal={Taiwanese J. Math.},
      volume={14},
       pages={2497\ndash 2511},
         url={https://doi.org/10.11650/twjm/1500406086},
}

\bib{MR2058234}{article}{
      author={Matsushita, Shin-ya},
      author={Takahashi, Wataru},
       title={Weak and strong convergence theorems for relatively nonexpansive
  mappings in {B}anach spaces},
        date={2004},
        ISSN={1687-1820},
     journal={Fixed Point Theory Appl.},
       pages={37\ndash 47},
         url={http://dx.doi.org/10.1155/S1687182004310089},
}

\bib{MR2773244}{article}{
      author={Nilsrakoo, Weerayuth},
      author={Saejung, Satit},
       title={Strong convergence theorems by {H}alpern-{M}ann iterations for
  relatively nonexpansive mappings in {B}anach spaces},
        date={2011},
        ISSN={0096-3003},
     journal={Appl. Math. Comput.},
      volume={217},
       pages={6577\ndash 6586},
         url={https://doi.org/10.1016/j.amc.2011.01.040},
}

\bib{MR1386686}{incollection}{
      author={Reich, Simeon},
       title={A weak convergence theorem for the alternating method with
  {B}regman distances},
        date={1996},
   booktitle={Theory and applications of nonlinear operators of accretive and
  monotone type},
      series={Lecture Notes in Pure and Appl. Math.},
      volume={178},
   publisher={Dekker, New York},
       pages={313\ndash 318},
}

\bib{MR2548424}{book}{
      author={Takahashi, Wataru},
       title={Introduction to nonlinear and convex analysis},
   publisher={Yokohama Publishers, Yokohama},
        date={2009},
        ISBN={978-4-946552-35-9},
}

\bib{MR2858695}{article}{
      author={Takahashi, Wataru},
      author={Takeuchi, Yukio},
       title={Nonlinear ergodic theorem without convexity for generalized
  hybrid mappings in a {H}ilbert space},
        date={2011},
        ISSN={1345-4773},
     journal={J. Nonlinear Convex Anal.},
      volume={12},
       pages={399\ndash 406},
}

\bib{MR3015118}{article}{
      author={Takahashi, Wataru},
      author={Wong, Ngai-Ching},
      author={Yao, Jen-Chih},
       title={Attractive point and weak convergence theorems for new
  generalized hybrid mappings in {H}ilbert spaces},
        date={2012},
        ISSN={1345-4773},
     journal={J. Nonlinear Convex Anal.},
      volume={13},
       pages={745\ndash 757},
}

\bib{MR3397117}{article}{
      author={Takahashi, Wataru},
      author={Wong, Ngai-Ching},
      author={Yao, Jen-Chih},
       title={Attractive points and {H}alpern-type strong convergence theorems
  in {H}ilbert spaces},
        date={2015},
        ISSN={1661-7738},
     journal={J. Fixed Point Theory Appl.},
      volume={17},
       pages={301\ndash 311},
         url={https://doi.org/10.1007/s11784-013-0142-3},
}

\end{biblist}
\end{bibdiv}

\end{document}